\documentclass[11pt, reqno]{amsart}
\usepackage{amsmath}
\usepackage{amssymb}
\usepackage{amsthm}
\usepackage{enumerate}
\usepackage{xcolor}
\usepackage{mathrsfs}
\usepackage{hyperref}
\usepackage[abbrev]{amsrefs}
\usepackage[utf8]{inputenc}
\usepackage{ifthen}
\usepackage{enumitem}
\usepackage{bm}
\usepackage{graphicx}
\usepackage[all]{xy}
\newtheorem{thm}{}[section]
\newtheorem{theorem}[thm]{Theorem}
\newtheorem{corollary}[thm]{Corollary}
\newtheorem{lemma}[thm]{Lemma}

\theoremstyle{definition}

\theoremstyle{remark}
\newtheorem{remark}[thm]{Remark}

\numberwithin{equation}{section}
\allowdisplaybreaks



\newcommand{\N}{\mathbb{N}}

\newcommand{\C}{\mathbb{C}}
\newcommand{\T}{\mathbb{T}}

\newcommand{\abs}[1]{\left\lvert #1\right\rvert}
\newcommand{\norm}[1]{\left\lVert #1\right\rVert}
\renewcommand{\epsilon}{\varepsilon}
\newcommand{\M}{M}
\newcommand{\DD}{\mathbb{D}}

\AtBeginDocument{
\def\MR#1{}
}

\begin{document}
\title[Mean convergence of Lagrange interpolation]{Mean convergence of Lagrange interpolation}
\author[G. Bello]{Glenier Bello}
\address{Departamento de Matem\'{a}ticas\\
Universidad Aut\'{o}noma de Madrid\\
28049 Madrid\\
Spain; 
ICMAT CSIC-UAM-UC3M-UCM\\
28049 Madrid, 
Spain}
\email{glenier.bello@uam.es}
\author[M. Bello-Hern\'{andez}]{Manuel Bello-Hern\'{andez}}
\address{Departamento de Matem\'{a}ticas y Computaci\'{o}n\\
Universidad de La Rioja\\
26004 Logro\~no\\
Spain}
\email{mbello@unirioja.es}
\subjclass[2010]{Primary 41A05; Secondary 30A10, 41A10}
\keywords{Lagrange interpolation, para-orthogonal polynomials, orthogonal polynomials}
\begin{abstract}
In this note we prove mean convergence of Lagrange interpolation at the zeros of para-orthogonal polynomials for measures in the unit circle which does not belong to Szeg\H{o}'s class in the unit circle. When the measure is in  Szeg\H{o}'s class mean convergence of Lagrange interpolation is proved for functions in the disk algebra.
\end{abstract}
\maketitle
\section{Introduction}
Mean convergence of Lagrange interpolation on zeros of orthogonal polynomials in the real line 
is a topic which has been extensively studied. 
Just to name a few works, let us recall the foundational paper of Erd\"{o}s and Tur\'{a}n \cite{ErdTur37}, 
the trilogy \cite{Nev76,Nev80,Nev1984} of Nevai, and also \cite{Ask72,MasMil08,VerXu05}. 
Marcinkiewicz \cite{Mar36} showed that there is a strong connection between mean convergence of Lagrange interpolation and mean convergence of Fourier series.

Interpolation with nodes at the unit circle has been studied for functions in the disk algebra $A(\overline\DD)$ 
(i.e., functions analytic in the unit disk $\DD$ which are continuous in $\overline{\DD}$), mostly using unit roots or perturbations of these points (cf.\ \cite{ChuiSheZho93,Lub00,Lub02,Wal65}). 
One of the reasons to consider functions in $A(\overline\DD)$ is that, given a nontrivial finite positive Borel measure $\mu$ on the unit circle $\T$, the algebraic polynomials are dense in $L^2(\mu)$ if and only if
$
\log\mu'\not\in L^1(\mathfrak{m}),
$
where  $\mathfrak{m}$ is the Lebesgue measure in $\T$ and $\mu'$ is the Radon--Nikodym derivative of $\mu$ with respect to $\mathfrak{m}$. This is due to Szeg\H{o}--Kolmogorov--Krein's theorem (see for example \cite[Chapter~1]{Ger61} or \cite[Chapter~3]{GreSze84}). 

In this note we are concerned with the mean convergence of Lagrange interpolation at the zeros of para-orthogonal polynomials for measures in the unit circle. The zeros of para-ortogonal polynomials are the nodes of the Szeg\H{o}'s quadrature formula, which is the analogous in the unit circle to Gaussian quadrature formula 
(cf.\ \cite{CanMorVel02,RuyPabPer09,Ber15,Gol02,JonNjaThr89}). 
These zeros share several properties with zeros of orthogonal polynomials with respect to a measure on the real line.

Consider a nontrivial finite positive Borel measure $\mu$ on the unit circle, 
and let $\{w_n\}_{n=0}^\infty$ be a sequence of points in $\T$. 
For each $n$, let $Z_n(w_n)$ denote the $n+1$ zeros of para-orthogonal polynomial of degree $n+1$ associated to $w_n$ and $\mu$. 
It is well known that $Z_n(w_n)\subset\T$. 
Given a function $f$ defined in $\T$, 
let $L_n(f)$ denote the $n$-th Lagrange interpolation polynomial of $f$ with respect to $Z_n(w_n)$. 
We obtain mean convergence of $L_n(f)$ in two different settings.

\begin{theorem}\label{teoMeanConvIntL2} 
Suppose that $\log\mu'\not\in L^1(\mathfrak{m})$. 
If $f\in C(\T)$ then for all $p\in(0,2]$ we have 
\[
\lim_{n\to\infty}\norm{f-L_n(f)}_p=0.
\]
\end{theorem}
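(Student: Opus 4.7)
The strategy combines Szeg\H{o}'s quadrature formula on $\T$ with the Szeg\H{o}--Kolmogorov--Krein theorem; throughout I interpret $\norm{\cdot}_p$ as $\norm{\cdot}_{L^p(\mu)}$. The hypothesis $\log\mu'\notin L^1(\mathfrak{m})$ enters exactly once, to guarantee density of the algebraic polynomials in $L^2(\mu)$.

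First I would reduce to $p=2$. Since $\mu(\T)<\infty$, H\"older's inequality gives $\norm{g}_{p}\le \mu(\T)^{1/p-1/2}\norm{g}_{2}$ for every $p\in(0,2]$, so it suffices to prove $\norm{f-L_n(f)}_{2}\to 0$. Fix $\epsilon>0$ and choose, by Szeg\H{o}--Kolmogorov--Krein, an algebraic polynomial $Q$ with $d=\deg Q$ and $\norm{f-Q}_{2}<\epsilon$. For $n\ge d$ the reproducing property of Lagrange interpolation gives $L_n(Q)=Q$, hence
\[
\norm{f-L_n(f)}_{2}\le \norm{f-Q}_{2}+\norm{L_n(f-Q)}_{2}.
\]

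The remaining step is to control $\norm{L_n(g)}_{2}$ for $g=f-Q\in C(\T)$. Here I would use the $(n+1)$-point Szeg\H{o} quadrature formula at the nodes $Z_n(w_n)$, which is exact on Laurent polynomials with exponents in $[-n,n]$. On $\T$ the function $|L_n(g)|^2$ belongs to this class, so
\[
\norm{L_n(g)}_{2}^{2}=\sum_{k=0}^{n}\lambda_{n,k}\,|g(z_{n,k})|^{2},
\]
with positive weights $\lambda_{n,k}$ summing to $\mu(\T)$. Exactness on Laurent polynomials of arbitrarily high degree, combined with Weierstrass' trigonometric theorem, forces the discrete measures $\sum_{k}\lambda_{n,k}\delta_{z_{n,k}}$ to converge weak-$*$ to $\mu$. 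Applied to the continuous function $|g|^{2}$ this yields
\[
\limsup_{n}\norm{L_n(f-Q)}_{2}^{2}\le\int_{\T}|f-Q|^{2}\,d\mu<\epsilon^{2},
\]
so $\limsup_{n}\norm{f-L_n(f)}_{2}\le 2\epsilon$; letting $\epsilon\to 0$ concludes the argument.

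The main technical point is the displayed identity for $\norm{L_n(g)}_{2}^{2}$: it rests on recognising that $|L_n(g)|^{2}$ on $\T$ is a Laurent polynomial of bidegree $(n,n)$, which is exactly the range of exactness for the $(n+1)$-node Szeg\H{o} quadrature. Everything else---H\"older, polynomial approximation in $L^2(\mu)$, and weak-$*$ convergence of the quadrature measures---is routine once this identity is in hand, and the Szeg\H{o} condition plays its role only in securing the approximating polynomial $Q$.
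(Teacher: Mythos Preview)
Your argument is correct and takes a genuinely different route from the paper's. The paper proves a uniform operator bound $\norm{L_n(f)}_2\le C\norm{f}$ via a duality argument (pair $L_n(f)$ against $g\in L^2(\mu)$, use the Szeg\H{o}--Kolmogorov--Krein density to replace $g$ by its partial sum $S_n(g)$, then pass to the discrete measure $\mu_n$) and concludes with Banach--Steinhaus; only afterwards does it combine this bound with polynomial density. You instead compute $\norm{L_n(g)}_2^2$ \emph{exactly} as $\int_\T|g|^2\,d\mu_n$ via the quadrature identity (this is the paper's Lemma on $\int p\bar q\,d\mu_n=\int p\bar q\,d\mu$ applied with $p=q=L_n(g)$, together with $L_n(g)=g$ at the nodes), and then invoke weak-$*$ convergence $\mu_n\to\mu$ to turn this into $\norm{f-Q}_2^2$. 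This is the classical Erd\H{o}s--Tur\'an mechanism transplanted to $\T$, and it is more elementary: no Riesz representation, no uniform boundedness principle. The weak-$*$ step is essential here---the cruder bound $\int|g|^2\,d\mu_n\le M\norm{g}_\infty^2$ would only recover the disk-algebra case, since $\norm{f-Q}_\infty$ cannot be made small for general $f\in C(\T)$ with $Q$ an algebraic polynomial. What the paper's approach buys in exchange is the uniform operator bound itself as a standalone lemma; your approach gives the convergence directly without isolating such a bound.
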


An analogous result holds for all nontrivial finite positive measures if we consider 
Lagrange interpolation of functions in the disk algebra $A(\overline\DD)$. 

\begin{theorem}\label{teoDiskAlg} 
If $f\in A(\overline\DD)$ then for all $p\in(0,2]$ we have 
\[
\lim_{n\to\infty}\norm{f-L_n(f)}_p=0.
\]
\end{theorem}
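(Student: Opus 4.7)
The plan is to combine uniform polynomial approximation on $\overline\DD$ with the Szegő quadrature that comes with the interpolation nodes $Z_n(w_n)$. Since $f\in A(\overline\DD)$, polynomials are dense in $A(\overline\DD)$ in the supremum norm (e.g.\ via Cesàro means of the Taylor series, or Mergelyan's theorem). So for every $\epsilon>0$ we fix an algebraic polynomial $P$ with $\norm{f-P}_\infty<\epsilon$, where the sup is taken over $\T$.

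The key ingredient is the Szegő quadrature associated with $Z_n(w_n)$: the $n+1$ nodes $\{z_{j,n}\}$ and certain positive weights $\{\lambda_{j,n}\}$ with $\sum_j\lambda_{j,n}=\mu(\T)$ provide an exact formula for every Laurent polynomial whose exponents lie in $\{-n,\ldots,n\}$. Given any continuous $g$ on $\T$, the interpolant $L_n(g)$ has degree at most $n$, so on $\T$ the function $|L_n(g)|^2=L_n(g)(z)\,\overline{L_n(g)(z)}$ is precisely such a Laurent polynomial. Evaluating at the nodes and using interpolation gives
\[
\int_\T\abs{L_n(g)}^2\,\D\mu=\sum_{j=0}^n\lambda_{j,n}\abs{L_n(g)(z_{j,n})}^2=\sum_{j=0}^n\lambda_{j,n}\abs{g(z_{j,n})}^2\leq\mu(\T)\,\norm{g}_\infty^2.
\]

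Now for $n\geq\deg P$, uniqueness of Lagrange interpolation yields $L_n(P)=P$, so
\[
f-L_n(f)=(f-P)-L_n(f-P).
\]
Applying the previous estimate with $g=f-P$ and using Hölder's inequality (since $\mu(\T)<\infty$ we have $\norm{\cdot}_{L^p(\mu)}\leq\mu(\T)^{1/p-1/2}\norm{\cdot}_{L^2(\mu)}$ for $0<p\leq 2$, which remains valid for $p<1$ via Jensen applied to the probability measure $\mu/\mu(\T)$) we obtain
\[
\norm{f-L_n(f)}_p\leq\norm{f-P}_p+\mu(\T)^{1/p-1/2}\norm{L_n(f-P)}_{L^2(\mu)}\leq 2\,\mu(\T)^{1/p}\,\epsilon.
\]
Letting $\epsilon\to 0$ finishes the proof.

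The only delicate point is the degree bookkeeping at the Szegő quadrature step: one must check that $\abs{L_n(g)}^2$ on $\T$ really belongs to the range of Laurent polynomials for which the $(n+1)$-node quadrature is exact. Everything else is an approximation argument. The hypothesis that $\mu$ has infinite support is used only to ensure that the para-orthogonal polynomial of degree $n+1$ exists with $n+1$ distinct simple zeros on $\T$, so that $L_n$ is well defined for every $n$; no Szegő-class assumption is needed because we never try to approximate $f$ in $L^2(\mu)$ by polynomials — we approximate it uniformly first and move to $L^p(\mu)$ only through the quadrature estimate above.
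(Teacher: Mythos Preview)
Your argument is correct and essentially the same as the paper's: both use uniform polynomial approximation of $f\in A(\overline\DD)$ together with the key bound $\|L_n(g)\|_2^2\le\mu(\T)\,\|g\|_\infty^2$, which you obtain via exactness of the Szeg\H{o} quadrature on Laurent polynomials with exponents in $\{-n,\dots,n\}$ and the paper obtains via the orthogonality of the fundamental polynomials $\ell_{jn}$ (Remark~\ref{rem:ljnOB})---two phrasings of the same identity, since the quadrature weights are $\lambda_{j,n}=1/K_n(\zeta_{jn},\zeta_{jn})=\|\ell_{jn}\|_2^2$. One cosmetic slip: your displayed inequality $\|f-L_n(f)\|_p\le\|f-P\|_p+\cdots$ uses the triangle inequality in $\|\cdot\|_p$, which fails for $0<p<1$; the paper avoids this by proving the case $p=2$ first and then deducing $0<p<2$ via H\"older, which you can do as well.
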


Recall that mean convergence implies the existence of a subsequence of $\{L_n(f)\}$ which converges almost everywhere to $f$. This contrasts with \cite{Ver82}, where V\'{e}rtesi proved that for any infinite triangular array in the unit circle, there exists a function in $A(\overline\DD)$ such that the sequence of its Lagrange interpolants diverge almost everywhere in the unit circle. 

Our results delve into the close relationship between the zeros of orthogonal polynomials in the real line and the zeros of para-orthogonal polynomials.

The paper is organized as follows. 
In Section~\ref{SecLag} we recall some definitions and well known facts about Lagrange interpolation. 
In Section~\ref{sec:proofs} we prove both theorems above.

\section{Lagrange interpolation}\label{SecLag}
Let $\mu$ be a finite positive Borel measure on the unit circle $\T$ which is nontrivial i.e. its support has infinite points, 
and consider the Hilbert space $L^2(\mu)$ of $\mu$-square-integrable functions with inner product 
\[
\left\langle f,g\right\rangle:=\int_\T f\bar g\, d\mu,
\]
and associated norm\footnote{For $0<p<1$ we also use the same notation.} when $1\le p<\infty$,
\[
\|f\|_p=\left(\int |f|^p\,d\mu\right)^{1/p}.
\]
Let $\{\varphi_n\}_{n=0}^\infty$ be the sequence of orthonormal polynomials with respect to $\mu$ 
with positive leading coefficients; that is, these are polynomials of the form 
\[
\varphi_n(z)=\kappa_nz^n+\text{lower degree terms},\quad \kappa_n>0,
\]
satisfying the orthonormality relations $\langle\varphi_n,\varphi_m\rangle=\delta_{nm}$ for all $n,m\ge0$. 

We denote by $\Pi_n$ the set of polynomials of degree less than or equal to $n$. 
Given $p\in\Pi_n$, let $p^*(z):=z^n\overline{p(1/\bar z)}$ be the reversed polynomial of $p$.

The Christoffel kernel is defined by
$$
K_n(w,z)=\sum_{j=0}^n\overline{\varphi_j(w)}\varphi_j(z).
$$
The reproducing kernel property of $K_n$ says that for every $p\in\Pi_n$ we have 
\begin{equation}\label{eq:kernel}
\left\langle p,K_n(w,\cdot)\right\rangle=\int p(z)\overline{K_n(w,z)}\, d\mu(z)=p(w). 
\end{equation}
By the Christoffel--Darboux formula (cf.\ \cite[Theorem~11.4.2]{Sze75}), 
\begin{equation}\label{eq:ChrDar}
K_n(w,z)=\frac{\overline{\varphi_{n+1}^*(w)}\varphi_{n+1}^*(z)-\overline{\varphi_{n+1}(w)}\varphi_{n+1}(z)}{1-\overline{w}z},
\end{equation}
for all $w\neq z$. 
Para-orthogonal polynomials are defined as 
\[
B_{n+1}(w,z):=(1-\overline{w}z)K_n(w,z)
=\overline{\varphi_{n+1}^*(w)}\varphi_{n+1}^*(z)-\overline{\varphi_{n+1}(w)}\varphi_{n+1}(z).
\]
These polynomials were introduced in \cite[Section~6]{JonNjaThr89}. 

For the rest of this section, let us fix $n\in\N$ and $w\in\T$. 
We denote by $Z_n(w)$ the zeros of $B_{n+1}(w,\cdot)$. 
It is well known that these zeros are simple and lie in the unit circle 
(cf.\ \cite{CanMorVel02,Gol02,JonNjaThr89}). 
Hence 
\[
Z_n(w)=\{\zeta_{0n},\zeta_{1n},\ldots,\zeta_{nn}\}\subset\T. 
\]
Clearly, one of the zeros is $w$. 

Next, we recall a symmetric property concerning the zeros of para-orthogonal polynomials. 
We include short proof for an easy reading and to emphasize the connection with the formulas above. 

\begin{lemma}[see \cite{Gol02}]\label{lem:symmetricProp1}
The identity 
\[
K_n(\zeta_{jn},\zeta_{mn})=0
\]
holds for all $j,m\in\{0,1,\dotsc,n\}$ distinct. In particular, for all $j,m\in\{0,1,\dotsc,n\}$ we have 
\[
Z_n(\zeta_{jn})=Z_n(\zeta_{mn}). 
\]
\end{lemma}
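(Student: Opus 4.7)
The plan is to characterize $Z_n(w)$ as a level set on $\T$ of the rational function $\psi(z):=\varphi_{n+1}^*(z)/\varphi_{n+1}(z)$, from which both assertions will follow immediately.

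First, I would recall the standard fact that all zeros of $\varphi_{n+1}$ lie in $\DD$, so $\varphi_{n+1}$ is nonvanishing on $\T$ while $\varphi_{n+1}^*$ is nonvanishing on $\overline\DD$; hence $\psi$ is analytic and nonvanishing in a neighbourhood of $\T$. Since $\varphi_{n+1}^*(\zeta)=\zeta^{n+1}\overline{\varphi_{n+1}(\zeta)}$ for $\zeta\in\T$, a short computation yields $\overline{\psi(\zeta)}=1/\psi(\zeta)$, i.e.\ $|\psi|\equiv 1$ on $\T$.

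Next, I would rewrite the equation $B_{n+1}(w,\zeta)=0$ for $w,\zeta\in\T$. Dividing the defining expression for $B_{n+1}$ by $\overline{\varphi_{n+1}^*(w)}\,\varphi_{n+1}(\zeta)$ (nonzero by the previous step) gives
\[
\psi(\zeta)\;=\;\overline{\varphi_{n+1}(w)/\varphi_{n+1}^*(w)}\;=\;\overline{1/\psi(w)}\;=\;\psi(w),
\]
using $|\psi(w)|=1$ at the last step. Therefore
\[
Z_n(w)=\{\zeta\in\T:\psi(\zeta)=\psi(w)\}.
\]

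Both claims are immediate consequences of this level-set description. For any $\zeta_{jn}\in Z_n(w)$ one has $\psi(\zeta_{jn})=\psi(w)$, so $Z_n(\zeta_{jn})=\{\zeta\in\T:\psi(\zeta)=\psi(\zeta_{jn})\}=Z_n(w)$, which yields the second statement and in particular forces $\zeta_{mn}\in Z_n(\zeta_{jn})$, hence $B_{n+1}(\zeta_{jn},\zeta_{mn})=0$. For $j\ne m$ the Christoffel--Darboux identity \eqref{eq:ChrDar} then produces
\[
K_n(\zeta_{jn},\zeta_{mn})=\frac{B_{n+1}(\zeta_{jn},\zeta_{mn})}{1-\overline{\zeta_{jn}}\zeta_{mn}}=0.
\]
I do not anticipate a serious obstacle: the only care required is in justifying the division (via the nonvanishing of $\varphi_{n+1}$ and $\varphi_{n+1}^*$ on $\T$) and the one-line algebraic simplification using $|\psi|=1$ on $\T$.
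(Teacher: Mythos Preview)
Your proof is correct and follows essentially the same route as the paper's: both arguments rewrite the condition $B_{n+1}(w,\zeta)=0$ as the equality of the unimodular ratio $\varphi_{n+1}^*/\varphi_{n+1}$ (or its reciprocal) at $w$ and $\zeta$, use $|\psi|=1$ on $\T$ to pass between $\psi$ and $\overline{1/\psi}$, and then invoke the Christoffel--Darboux formula. Your level-set packaging is slightly more explicit, but the mathematical content is the same.
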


\begin{proof}
Take distinct indexes $j,m$. Since $\zeta_{jn},\zeta_{mn}\in Z_n(w)$, we have 
\[
\frac{\varphi_{n+1}(\zeta_{jn})}{\varphi_{n+1}^*(\zeta_{jn})}
=
\frac{\overline{\varphi_{n+1}^*(w)}}{\overline{\varphi_{n+1}(w)}}
=
\frac{\varphi_{n+1}(\zeta_{mn})}{\varphi_{n+1}^*(\zeta_{mn})}
\]
Since these quotients have modulus $1$, we also know that 
\[
\frac{\varphi_{n+1}(\zeta_{jn})}{\varphi_{n+1}^*(\zeta_{jn})}
=
\frac{\overline{\varphi_{n+1}^*(\zeta_{jn})}}{\overline{\varphi_{n+1}(\zeta_{jn})}}.
\]
Hence the statements follow using \eqref{eq:ChrDar}. 
\end{proof}

Now let us consider the polynomials
$$
L_{jn}(z)=\frac{K_n(\zeta_{jn},z)}{\sqrt{K_n(\zeta_{jn},\zeta_{jn})}},\quad j=0,1,\ldots,n. 
$$
The following well known result will be crucial in the proofs of our two theorems in Section~\ref{sec:proofs}. 

\begin{lemma}[see \cite{Gol02}]\label{lem:basis}
The polynomials $\{L_{jn}\}_{j=0}^n$ form an orthonormal basis of $\Pi_n$. 
\end{lemma}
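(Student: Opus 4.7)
The plan is to verify orthonormality directly from the reproducing property of $K_n$ together with Lemma~\ref{lem:symmetricProp1}, and then close the argument by a dimension count. Each $L_{jn}$ lies in $\Pi_n$ since $K_n(\zeta_{jn},\cdot)=\sum_{k=0}^n\overline{\varphi_k(\zeta_{jn})}\,\varphi_k(\cdot)$ is a polynomial of degree at most $n$, and the normalizing factor $\sqrt{K_n(\zeta_{jn},\zeta_{jn})}$ is a strictly positive real number because $K_n(\zeta,\zeta)=\sum_{k=0}^n|\varphi_k(\zeta)|^2\ge|\varphi_0|^2>0$.

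For the orthonormality computation, I would apply the reproducing kernel identity \eqref{eq:kernel} to the polynomial $p(z)=K_n(\zeta_{jn},z)\in\Pi_n$, which yields
\[
\bigl\langle K_n(\zeta_{jn},\cdot),\,K_n(\zeta_{mn},\cdot)\bigr\rangle = K_n(\zeta_{jn},\zeta_{mn}).
\]
Dividing by $\sqrt{K_n(\zeta_{jn},\zeta_{jn})K_n(\zeta_{mn},\zeta_{mn})}$, this gives
\[
\langle L_{jn},L_{mn}\rangle = \frac{K_n(\zeta_{jn},\zeta_{mn})}{\sqrt{K_n(\zeta_{jn},\zeta_{jn})K_n(\zeta_{mn},\zeta_{mn})}}.
\]
When $j\ne m$, Lemma~\ref{lem:symmetricProp1} makes the numerator vanish, while for $j=m$ the right-hand side is clearly $1$. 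Hence $\langle L_{jn},L_{mn}\rangle=\delta_{jm}$.

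Finally, I would conclude by noting that the set $\{L_{jn}\}_{j=0}^n$ consists of $n+1$ orthonormal (in particular linearly independent) vectors in $\Pi_n$, a space of dimension $n+1$, so they automatically form an orthonormal basis. There is no real obstacle here: the only ingredients are the reproducing property \eqref{eq:kernel} and the vanishing of $K_n$ at distinct pairs of para-orthogonal zeros, both already established. The point of the lemma is not its proof but its consequence, namely that Lagrange interpolation at $Z_n(w)$ admits the compact expansion $L_n(f)=\sum_{j=0}^n f(\zeta_{jn})\,L_{jn}(\zeta_{jn})^{-1}L_{jn}$, which will be exploited in Section~\ref{sec:proofs}.
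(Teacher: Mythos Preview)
Your proof is correct and follows exactly the same route as the paper's own argument: compute $\langle L_{jn},L_{mn}\rangle$ via the reproducing property \eqref{eq:kernel} to obtain $K_n(\zeta_{jn},\zeta_{mn})$ over the normalizing factors, and then invoke Lemma~\ref{lem:symmetricProp1} for the off-diagonal vanishing. You supply a few extra details (well-definedness of the normalization, membership in $\Pi_n$, and the dimension count) that the paper leaves implicit, but the substance is identical.
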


\begin{proof}
Using \eqref{eq:kernel} and Lemma~\ref{lem:symmetricProp1} we have 
\[
\int L_{jn}\overline{L_{mn}}\, d\mu
=
\frac{K_n(\zeta_{jn},\zeta_{mn})}{\sqrt{K_n(\zeta_{jn},\zeta_{jn})}\sqrt{K_n(\zeta_{mn},\zeta_{mn})}}
=
\delta_{jm}.\qedhere
\]
\end{proof}

Let $\mu_n$ be the measure associated to $Z_n(w)$ given by
\begin{equation}
\label{medDiscreta}
d\mu_n=\sum_{j=0}^n\frac{d\delta_{\zeta_{jn}}}{K_n(\zeta_{jn},\zeta_{jn})}. 
\end{equation}

The Szeg\H{o} quadrature (cf.\ \cite{JonNjaThr89}) is defined by
\[
Q_n(f):=\int f\, d\mu_n=\sum_{j=0}^n\frac{f(\zeta_{jn})}{K_n(\zeta_{jn},\zeta_{jn})}.
\]
 
\begin{lemma}[see \cite{Gol02}]\label{lem:KeyLemma}
If $p,q\in\Pi_n$ then 
\[
Q_n(p\bar q)=\int p\bar q\, d\mu_n=\int p\bar q\, d\mu.
\]
\end{lemma}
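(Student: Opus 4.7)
The plan is to exploit the orthonormal basis $\{L_{jn}\}_{j=0}^n$ of $\Pi_n$ provided by the previous lemma, reducing both sides of the asserted identity to the same inner product of coefficient sequences. Since $p,q\in\Pi_n$, I would first expand
\[
p=\sum_{j=0}^n a_j L_{jn},\qquad q=\sum_{j=0}^n b_j L_{jn}.
\]
By the orthonormality of $\{L_{jn}\}$ with respect to $\mu$, the continuous side is immediately
\[
\int p\bar q\, d\mu=\sum_{j=0}^n a_j\bar b_j.
\]

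Next I would compute the values $L_{jn}(\zeta_{kn})$ at the interpolation nodes. Using the definition of $L_{jn}$ together with Lemma~\ref{lem:symmetricProp1}, which gives $K_n(\zeta_{jn},\zeta_{kn})=0$ for $j\ne k$, I obtain
\[
L_{jn}(\zeta_{kn})=\frac{K_n(\zeta_{jn},\zeta_{kn})}{\sqrt{K_n(\zeta_{jn},\zeta_{jn})}}=\delta_{jk}\sqrt{K_n(\zeta_{kn},\zeta_{kn})}.
\]
Substituting back into the expansions yields $p(\zeta_{kn})=a_k\sqrt{K_n(\zeta_{kn},\zeta_{kn})}$ and similarly for $q$.

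Plugging these into the definition \eqref{medDiscreta} of $\mu_n$,
\[
\int p\bar q\, d\mu_n=\sum_{k=0}^n\frac{p(\zeta_{kn})\overline{q(\zeta_{kn})}}{K_n(\zeta_{kn},\zeta_{kn})}=\sum_{k=0}^n a_k\bar b_k,
\]
matches the continuous computation and finishes the proof. There is essentially no obstacle here: the lemma is a direct quadrature-type consequence of Lemma~\ref{lem:basis} and Lemma~\ref{lem:symmetricProp1}; the only point deserving care is recognizing that the symmetry property collapses $L_{jn}(\zeta_{kn})$ to a Kronecker-delta, after which the two sides reduce to the same finite sum $\sum_k a_k\bar b_k$.
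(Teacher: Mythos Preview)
Your proof is correct and follows essentially the same strategy as the paper: both reduce the identity to Parseval for the orthonormal basis $\{L_{jn}\}_{j=0}^n$ of $\Pi_n$. The only cosmetic difference is that the paper identifies the Fourier coefficient $a_j=\langle p,L_{jn}\rangle=\dfrac{p(\zeta_{jn})}{\sqrt{K_n(\zeta_{jn},\zeta_{jn})}}$ directly from the reproducing property \eqref{eq:kernel}, whereas you obtain the same relation by evaluating the expansion $p=\sum_j a_jL_{jn}$ at the nodes via Lemma~\ref{lem:symmetricProp1}.
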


\begin{proof}
By definition of $\mu_n$ we have 
\[
\int p\bar q\, d\mu_n=\sum_{j=0}^n\frac{p(\zeta_{jn})}{\sqrt{K_n(\zeta_{jn},\zeta_{jn})}}
\frac{\overline{q(\zeta_{jn})}}{\sqrt{K_n(\zeta_{jn},\zeta_{jn})}}.
\]
Using \eqref{eq:kernel} we get 
\[
\int p\overline{L_{jn}}\, d\mu=\frac{p(\zeta_{jn})}{\sqrt{K_n(\zeta_{jn},\zeta_{jn})}}. 
\]
Now the statement follows using Lemma~\ref{lem:basis}. 
\end{proof}

\begin{corollary}[\cite{JonNjaThr89}]\label{cor:LaurentPoly}
If $\Lambda$ is a Laurent polynomial of degree $n$ in $z$ and $1/z$, then 
\[
Q_n(\Lambda)=\int \Lambda\, d\mu_n=\int \Lambda\, d\mu.
\]
\end{corollary}

\begin{proof}
By the fundamental theorem of algebra there exist polynomials $p$ and $q$
in $\Pi_n$ such that $\Lambda(z,1/z)=p(z)\overline{q(z)}$. 
Hence the statement follows directly from Lemma~\ref{lem:KeyLemma}. 
\end{proof}

\begin{corollary}[\cite{JonNjaThr89}] \label{CorConvCuad} If $f\in C(\T)$ then 
\[
\lim_{n\to\infty}Q_n(f)=\int f\, d\mu.
\]
\end{corollary} 

\begin{proof}
This follows immediately from the the density in the uniform norm of Laurent polynomials in $C(\T)$ 
and Corollary~\ref{cor:LaurentPoly}.
\end{proof}

\begin{remark}
Notice that taking $p=q=1$ in Lemma~\ref{lem:KeyLemma} we get 
\begin{equation}\label{eq:M}
\mu_n(\T)=\mu(\T)=:\M<\infty.
\end{equation}
\end{remark}

Let us denote the fundamental polynomials for $Z_n(w)$ by 
$$
\ell_{jn}(z)=\frac{K_n(\zeta_{jn},z)}{K_n(\zeta_{jn},\zeta_{jn})},\quad j=0,1,\ldots,n.
$$
These are polynomials of degree $n$  that satisfy  
\begin{equation}\label{eq:elldelta}
\ell_{jn}(\zeta_{kn})=\delta_{jk}.
\end{equation}
The $n-$th Lagrange interpolation polynomial with respect to $Z_n(w)$ of a function $f$ defined in $\T$ 
is defined as
\[
L_n(f)(z):=\sum_{j=0}^nf(\zeta_{jn})\ell_{jn}(z). 
\]

\begin{remark}\label{rem:ljnOB}
Note that 
\[
\ell_{jn}\sqrt{K_n(\zeta_{jn},\zeta_{jn})}=L_{jn}, \quad j=0,1,\dotsc,n,
\]
so due to Lemma~\ref{lem:basis} the fundamental polynomials form an orthogonal basis of $\Pi_n$. 
Since 
\[
\sum_{j=0}^{n}\ell_{jn}(z)=L_n(1)=1
\]
for all $z\in\T$, we also have 
\[
\sum_{j=0}^{n}\norm{\ell_{jn}}_2^2=\Big\lVert\sum_{j=0}^{n}\ell_{jn}\Big\rVert_2^2
=\mu(\T)=M.
\]
\end{remark}

\begin{lemma} \label{lemsquareF} 
For any function $f\colon\T\to\C$ we have 
\[
\norm{L_n(f)}_2^2=Q_n(\abs f^2).
\]
\end{lemma}

\begin{proof}
Using Lemma~\ref{lem:KeyLemma} we obtain 
\[
\|L_n(f)\|_2^2=\int |L_n(f)|^2\, d\mu=\int |L_n(f)|^2\, d\mu_n=Q_n(|f|^2). 
\qedhere
\]
\end{proof}

\section{Proofs of Theorems~\ref{teoMeanConvIntL2} and \ref{teoDiskAlg}}\label{sec:proofs}
In this section we prove our two main results. 
So let us consider a nontrivial finite positive Borel measure $\mu$ on $\T$, 
and fix a sequence of points $\{w_n\}_{n=0}^{\infty}$ in $\T$. 
As in Section~\ref{SecLag}, 
for each $n$, let $Z_n(w_n)$ denote the zeros of para-orthogonal polynomials associated to $w_n$ and $\mu$, and 
$L_n(f)$ denote the $n$-th Lagrange interpolation polynomial with respect to $Z_n(w_n)$ 
of a function $f$. 

\begin{proof}[Proof of Theorem \ref{teoMeanConvIntL2}] 
It is enough to consider the case $p=2$. Indeed, once this case is proved, the case $0<p<2$ follows using H\"older's inequality. So let us focus on $p=2$.

Let $f\in C(\T)$. Fix $\epsilon>0$.
Since $\log \mu'\not\in L^2(\mathfrak{m})$, by the Szeg\H{o}-Kolmogorov-Krein theorem 
there exists a polynomial $\Pi$ such that
\begin{equation}
\label{aproxL2f}
\|f-\Pi\|_2^2<\epsilon.
\end{equation}
Taking $n$ larger than the degree of $\Pi$, we have
\[
\begin{split}
\norm{f-L_n(f)}_2^2
&\le 
\left(\norm{f-\Pi}_2+\norm{L_n(\Pi-f)}_2\right)^2\\
&\le
2\left(\norm{f-\Pi}_2^2+\norm{L_n(\Pi-f)}_2^2\right).
\end{split}
\]
Thus, according to \eqref{aproxL2f}, Lemma \ref{lemsquareF}, and Corollary \ref{CorConvCuad}, we get 
\[
\limsup_{n\to\infty}\|f-L_n(f)\|_2^2\le 4\epsilon. 
\]
Therefore $\norm{f-L_n(f)}_2$ converges to $0$, as we wanted to prove. 
\end{proof}

As an immediate consequence of the proof of Theorem~\ref{teoMeanConvIntL2}, we obtain the following result.

\begin{corollary} We have
\begin{equation}\label{convInt}
\lim_{n\to\infty}\|f-L_n(f)\|_2=0, \quad \forall f\in C(\T),
\end{equation}
if and only if polynomials are dense in $L^2(\mu)$.

\end{corollary}

\begin{proof} By the proof of Theorem~\ref{teoMeanConvIntL2} we have that if polynomials are dense in $L^2(\mu)$ then \eqref{convInt} holds. The other implication is obvius since the continuous functions are dense in $L^2(\mu)$.
\end{proof}


\begin{proof}[Proof of Theorem \ref{teoDiskAlg}]
As in the proof of Theorem~\ref{teoMeanConvIntL2}, we just need to focus on the case $p=2$. 

It is well known that the functions in $A(\overline\DD)$ are uniform limit of algebraic polynomials in $\overline{\DD}$. 
Thus, given $f\in A(\overline\DD)$, there exits a sequence of algebraic polynomials $\{P_n\}$ such that $\deg(P_n)\le n$ and
\[
\norm{f-P_n}_\infty=\max_{z\in\T}\abs{f(z)-P_n(z)} \underset{n\to\infty}{\longrightarrow}0.
\]
Clearly, this implies 
\[
\norm{f-P_n}_2\underset{n\to\infty}{\longrightarrow}0. 
\]
Now, using that $L_n$ is a projection on $\Pi_n$ and Remark~\ref{rem:ljnOB} we have 
\[
\begin{split}
\norm{L_n(f)-P_n}_2^2
&=
\norm{L_n(f-P_n)}_2^2\\
&=
\Big\lVert\sum_{j=0}^{n}(f(\zeta_{jn})-P_n(\zeta_{jn}))\ell_{jn}\Big\rVert_2^2\\
&=
\sum_{j=0}^{n}\norm{f(\zeta_{jn})-P_n(\zeta_{jn}))\ell_{jn}}_2^2\\
&\le 
M\norm{f-P_n}_\infty
\end{split}
\]
Therefore 
\[
\norm{f-L_n(f)}_2\le\norm{f-P_n}_2+\norm{L_n(f)-P_n}_2\underset{n\to\infty}{\longrightarrow}0,
\]
as we wanted to prove. 
\end{proof}

\bibliographystyle{acm}
\bibliography{BibTexOrtPol}

\end{document}